\documentclass[12pt]{amsart}

\usepackage{amsmath,amssymb,enumitem,verbatim,stmaryrd,xcolor,microtype,graphicx,mathscinet,mathtools}
\usepackage[T1]{fontenc}
\usepackage[utf8]{inputenc}
\usepackage[english]{babel}
\usepackage[top=3.2cm,bottom=3.2cm,left=3.2cm,right=3.2cm]{geometry}
\usepackage[bookmarksdepth=2,linktoc=page,colorlinks,linkcolor={red!80!black},citecolor={red!80!black},urlcolor={blue!80!black}]{hyperref}
\usepackage[all]{xy}
\usepackage[mathcal]{euscript}                               
\usepackage{mathptmx}                                        
\usepackage[backgroundcolor=yellow,linecolor=yellow,textsize=footnotesize]{todonotes}\setlength{\marginparwidth}{2,5cm} \makeatletter \providecommand \@dotsep{5} \def\listtodoname{List of Todos} \def\listoftodos{\@starttoc{tdo}\listtodoname} 
 \makeatother 

\usepackage{etoolbox}
\makeatletter
\patchcmd{\@startsection}{\@afterindenttrue}{\@afterindentfalse}{}{}             
\patchcmd{\part}{\bfseries}{\bfseries\LARGE}{}{}
\patchcmd{\section}{\scshape}{\bfseries}{}{}\renewcommand{\@secnumfont}{\bfseries} 
\patchcmd{\@settitle}{\uppercasenonmath\@title}{\large}{}{}
\patchcmd{\@setauthors}{\MakeUppercase}{}{}{}
\addto{\captionsenglish}{} 
\makeatother

\usepackage{fancyhdr}

\pagestyle{fancy}
\fancyhead{}
\fancyfoot{}
\fancyhead[OR,EL]{\footnotesize \thepage}
\fancyhead[OC]{\footnotesize On the Structure of Hyperfields Obtained as Quotients of Fields}
\fancyhead[EC]{\footnotesize  Matthew Baker and Tong Jin}
\setlength{\headheight}{12pt}

\newtheorem{theorem}{Theorem}[section]
\newtheorem{lemma}[theorem]{Lemma}

\theoremstyle{definition}
\newtheorem{definition}[theorem]{Definition}

\theoremstyle{remark}
\newtheorem{remark}[theorem]{Remark}

\numberwithin{equation}{section}

\setcounter{tocdepth}{1}   
\DeclareRobustCommand{\gobblefour}[5]{}    

\DeclareFontFamily{OT1}{pzc}{}                                
\DeclareFontShape{OT1}{pzc}{m}{it}{<-> s * [1.10] pzcmi7t}{}
\DeclareMathAlphabet{\mathpzc}{OT1}{pzc}{m}{it}
\DeclareSymbolFont{sfoperators}{OT1}{bch}{m}{n} \DeclareSymbolFontAlphabet{\mathsf}{sfoperators} \makeatletter\def\operator@font{\mathgroup\symsfoperators}\makeatother 
\DeclareSymbolFont{cmletters}{OML}{cmm}{m}{it}              
\DeclareSymbolFont{cmsymbols}{OMS}{cmsy}{m}{n}
\DeclareSymbolFont{cmlargesymbols}{OMX}{cmex}{m}{n}
\DeclareMathSymbol{\myjmath}{\mathord}{cmletters}{"7C}     \let\jmath\myjmath 
\DeclareMathSymbol{\myamalg}{\mathbin}{cmsymbols}{"71}     
\DeclareMathSymbol{\mycoprod}{\mathop}{cmlargesymbols}{"60}
\DeclareMathSymbol{\myalpha}{\mathord}{cmletters}{"0B}     \let\alpha\myalpha 
\DeclareMathSymbol{\mybeta}{\mathord}{cmletters}{"0C}      \let\beta\mybeta
\DeclareMathSymbol{\mygamma}{\mathord}{cmletters}{"0D}     \let\gamma\mygamma
\DeclareMathSymbol{\mydelta}{\mathord}{cmletters}{"0E}     \let\delta\mydelta
\DeclareMathSymbol{\myepsilon}{\mathord}{cmletters}{"0F}   \let\epsilon\myepsilon
\DeclareMathSymbol{\myzeta}{\mathord}{cmletters}{"10}      \let\zeta\myzeta
\DeclareMathSymbol{\myeta}{\mathord}{cmletters}{"11}       \let\eta\myeta
\DeclareMathSymbol{\mytheta}{\mathord}{cmletters}{"12}     \let\theta\mytheta
\DeclareMathSymbol{\myiota}{\mathord}{cmletters}{"13}      \let\iota\myiota
\DeclareMathSymbol{\mykappa}{\mathord}{cmletters}{"14}     \let\kappa\mykappa
\DeclareMathSymbol{\mylambda}{\mathord}{cmletters}{"15}    \let\lambda\mylambda
\DeclareMathSymbol{\mymu}{\mathord}{cmletters}{"16}        \let\mu\mymu
\DeclareMathSymbol{\mynu}{\mathord}{cmletters}{"17}        \let\nu\mynu
\DeclareMathSymbol{\myxi}{\mathord}{cmletters}{"18}        \let\xi\myxi
\DeclareMathSymbol{\mypi}{\mathord}{cmletters}{"19}        \let\pi\mypi
\DeclareMathSymbol{\myrho}{\mathord}{cmletters}{"1A}       \let\rho\myrho
\DeclareMathSymbol{\mysigma}{\mathord}{cmletters}{"1B}     \let\sigma\mysigma
\DeclareMathSymbol{\mytau}{\mathord}{cmletters}{"1C}       \let\tau\mytau
\DeclareMathSymbol{\myupsilon}{\mathord}{cmletters}{"1D}   \let\upsilon\myupsilon
\DeclareMathSymbol{\myphi}{\mathord}{cmletters}{"1E}       \let\phi\myphi
\DeclareMathSymbol{\mychi}{\mathord}{cmletters}{"1F}       \let\chi\mychi
\DeclareMathSymbol{\mypsi}{\mathord}{cmletters}{"20}       \let\psi\mypsi
\DeclareMathSymbol{\myomega}{\mathord}{cmletters}{"21}     \let\omega\myomega
\DeclareMathSymbol{\myvarepsilon}{\mathord}{cmletters}{"22}\let\varepsilon\myvarepsilon
\DeclareMathSymbol{\myvartheta}{\mathord}{cmletters}{"23}  \let\vartheta\myvartheta
\DeclareMathSymbol{\myvarpi}{\mathord}{cmletters}{"24}     \let\varpi\myvarpi
\DeclareMathSymbol{\myvarrho}{\mathord}{cmletters}{"25}    \let\varrho\myvarrho
\DeclareMathSymbol{\myvarsigma}{\mathord}{cmletters}{"26}  \let\varsigma\myvarsigma
\DeclareMathSymbol{\myvarphi}{\mathord}{cmletters}{"27}    \let\varphi\myvarphi

\newcommand\F{{\mathbb F}}

\renewcommand\H{{\mathbb H}}
\newcommand\K{{\mathbb K}}
\newcommand\N{{\mathbb N}}

\newcommand\Q{{\mathbb Q}}
\newcommand\R{{\mathbb R}}
\renewcommand\S{{\mathbb S}}

\newcommand\W{{\mathbb W}}

\newcommand\cP{{\mathcal P}}

\newcommand\ord{\textup{ord}}

\renewcommand\geq{\geqslant}
\renewcommand\leq{\leqslant}

\newcommand{\hyperplus}{\mathrel{\,\raisebox{-1.1pt}{\larger[-0]{$\boxplus$}}\,}}

\newdir{ >}{{}*!/-5pt/@^{(}}\newcommand{\arincl}[1]{\ar@{ >->}@<-0,0ex>#1} 


\title{On the Structure of Hyperfields Obtained as Quotients of Fields}

\author{Matthew Baker}
\email{mbaker@math.gatech.edu}
\address{School of Mathematics, Georgia Institute of Technology, Atlanta, USA}

\author{Tong Jin}
\email{tjinmath@outlook.com}
\address{Taishan College, Shandong University, Jinan, China}

\begin{document}


\begin{abstract} 
We determine all isomorphism classes of hyperfields of a given finite order which can be obtained as quotients of finite fields of sufficiently large order. Using this result, we determine which hyperfields of order at most 4 are quotients of fields. The main ingredients in the proof are the Weil bounds from number theory and a result from Ramsey theory.
\end{abstract}

\thanks{{\bf MSC (2010).} Primary 12K99. Secondary 11T30. }
\thanks{{\bf Key words.} Hyperfields, finite fields. }
\thanks{{\bf Acknowledgements.} We thank Trevor Gunn for catching some typos in the first draft of the paper, Keith Conrad for historical remarks concerning Theorem~\ref{thm:Weil}, and the anonymous referee for his/her careful reading and useful suggestions.}

\maketitle

\section{Introduction}
\label{intro}

If $F$ is a field and $G$ is a multiplicative subgroup of $F^\times$, the quotient $F/G$ naturally has the structure of a {\em hyperfield} in the sense of M.~Krasner; 
 in particular, addition is a {\em multi-valued} binary operation. Krasner asked in \cite{Krasner} if every abstract hyperfield arises from this quotient construction; it turns out that the answer is {\em no} (the first counterexample was found by Massouros \cite{Massouros}). Nevertheless, it is still of interest to classify quotient hyperfields, and the simplest case is that of quotients of {\em finite fields}. In this case, for each $r$ dividing $q-1$ there is a unique subgroup $G^r_q$ of ${\mathbb F}_q^\times$ of index $r$, and the quotient ${\mathbb F}_q / G^r_q$ is a hyperfield of order $r+1$. 

\medskip

It is natural to fix $r$ and vary $q$ and ask how many different hyperfields of order $r+1$ one obtains from this construction. For $r=1$, one always obtains either the so-called ``Krasner hyperfield'' ${\mathbb K}$ or the finite field $\F_2$, and for $r=2$ it is well-known that when $q\geq 7$ there are just 2 possibilities, depending on whether $q$ is congruent to 1 or 3 modulo 4. (For $q=3,5$ one obtains two additional ``sporadic'' quotient hyperfields of order $3$.) Our first main goal in this paper is to generalize these observations to arbitrary natural numbers $r$ (when the prime-power $q$ is sufficiently large). Using the Weil bounds for the number of points on algebraic curves over finite fields, we prove the following:

\begin{theorem}\label{first main theorem}
Given an integer $r\geq2$, there is a bound $N_r$ (which we can take to be $r^4$) such that the following holds: 
\begin{enumerate}
\item If $r$ is odd, there is a unique hyperfield $\mathbb{H}_r$ such that $\F_q/G^r_q$ is isomorphic to $\mathbb{H}_r$ for every prime power $q\geq N_r$ with $q\equiv 1 \pmod{r}$. 
\item If $r$ is even, there are two distinct hyperfields $\mathbb{H}_r$ and $\mathbb{H}'_r$ such that for every prime power $q\geq N_r$ with $q\equiv 1 \pmod{2r}$ (resp. $q\equiv r+1 \pmod{2r}$), $\F_q/G^r_q$ is isomorphic to $\mathbb{H}_r$ (resp. $\mathbb{H}'_r$). 
\end{enumerate}
\end{theorem}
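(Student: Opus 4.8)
The plan is to translate the statement into a question about \emph{cyclotomic numbers} and then feed it to the Weil bounds. First I would fix a generator $g$ of $\F_q^\times$ and write $G=G^r_q=\langle g^r\rangle$, so that $\F_q^\times/G$ is cyclic of order $r$ with cosets $[i]=g^iG$ $(0\le i<r)$; the hyperfield $\F_q/G$ then has underlying set $\{0\}\cup\{[0],\dots,[r-1]\}$, and its multiplication is just the group law of $\Z/r\Z$ with $0$ absorbing. Since everything is homogeneous under multiplication by $G$ and by the $[i]$, the entire hyperaddition is encoded by the sets $[0]\boxplus[i]$. Dividing out a common factor, one sees that $[j]\in[0]\boxplus[i]$ if and only if the cyclotomic number
\[
(i,j)_r:=\#\{x\in g^iG : x+1\in g^jG\}
\]
is positive, while $0\in[0]\boxplus[i]$ if and only if $-1\in g^iG$, i.e. $[i]$ is the coset $[e]$ of $-1$, where $e\equiv(q-1)/2\pmod r$.

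The arithmetic heart of the argument is the second step. Writing the indicator of a coset as a sum of multiplicative characters of order dividing $r$, I would expand $(i,j)_r$ into a main term $(q-2)/r^2$ plus a sum of $r^2-1$ Jacobi sums. By the Weil bounds each nontrivial Jacobi sum has modulus at most $\sqrt q$ (the degenerate characters contributing only $O(1)$), giving
\[
\Big|(i,j)_r-\tfrac{q}{r^2}\Big|\le \tfrac{r^2-1}{r^2}\sqrt q+O(1).
\]
I would then verify that the choice $N_r=r^4$ makes the right-hand side smaller than $q/r^2$, so that \emph{every} cyclotomic number $(i,j)_r$ is strictly positive once $q\ge N_r$. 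Pinning down the $O(1)$ terms carefully enough to obtain the clean bound $r^4$ is the step requiring the most attention, and I expect it to be the main technical obstacle.

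Granting positivity of all the $(i,j)_r$, the third step is immediate: for $q\ge N_r$ we get $[0]\boxplus[i]\supseteq\{[0],\dots,[r-1]\}$ for every $i$, so for nonzero $x,y$ the hypersum $x\boxplus y$ consists of all nonzero elements, together with $0$ exactly when $y=-x$. Consequently the isomorphism type of $\F_q/G$ depends only on the pair $(\Z/r\Z,\,-1)$, that is, only on the class $e$ of $-1$; and since $2e\equiv 0\pmod r$, we have $e\in\{0,r/2\}$.

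Finally I would read off the two cases. If $r$ is odd, $\Z/r\Z$ has no $2$-torsion, forcing $e=0$, and one checks $e=0$ indeed holds for all admissible $q$ (for $q$ odd, $(q-1)/r$ is even; for $q$ a power of $2$, $-1=1$); this yields the single hyperfield $\H_r$. If $r$ is even, then $e=0$ precisely when $(q-1)/r$ is even, i.e. when $q\equiv 1\pmod{2r}$, and $e=r/2$ precisely when $q\equiv r+1\pmod{2r}$. The two resulting hyperfields $\H_r$ and $\H'_r$ are non-isomorphic, since the property ``$x\boxplus x\ni 0$ for all $x$'' holds in $\H_r$ but fails in $\H'_r$, completing the proof.
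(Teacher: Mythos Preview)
Your approach is essentially the paper's: both use Weil-type bounds to show that for $q$ large every cyclotomic class appears in every hypersum of nonzero classes, so the hyperfield is determined solely by the coset of $-1$, which you then analyze exactly as the paper does. The only cosmetic difference is packaging: the paper invokes the Davenport--Hasse point count for the Fermat curve $g^i x^r+g^j y^r-g^k z^r=0$, obtaining the error term $(r-1)(r-2)\sqrt{q}$, whereas you expand $(i,j)_r$ directly into Jacobi sums. Note that only $(r-1)(r-2)$ of your Jacobi sums actually have modulus $\sqrt{q}$ (those with both characters nontrivial and product nontrivial); using this sharper count in place of your $r^2-1$ gives enough slack to absorb the $O(1)$ terms and recover $N_r=r^4$ cleanly, resolving the obstacle you flagged.
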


\begin{remark} \label{rem:betterNr}
The proof will show, more precisely, that we can take $N_r$ to be the smallest positive integer $N$ such that $(N+1) -  (r-1)(r-2)N^{1/2} - 3r > 0$.
When $r=2$, this gives $N_2 = 6$ and when $r=3$, it gives $N_3 = 17$.
\end{remark}

We use this result to determine, among all isomorphism classes of hyperfields of order at most $4$, precisely which ones are quotients of fields (finite or infinite).
In addition to Theorem~\ref{first main theorem} and some routine case analysis, this requires a Ramsey-theoretic result proved independently by Bergelson--Shapiro and Turnwald \cite{BergelsonShapiro,Turnwald}.  
Our classification can be summarized as follows:

\begin{theorem}\label{second main theorem}
\begin{enumerate}
\item There are 2 isomorphism classes of hyperfields of order 2, the Krasner hyperfield and the finite field $\F_2$. Both of them are quotients of finite fields. 
\item Every hyperfield of order 3 is a quotient of a field. More precisely, there are 5 isomorphism classes of hyperfields of order 3, all but one of which are isomorphic to quotients of finite fields. The hyperfield of signs is isomorphic to $\R / \R_{>0}$ but not to any quotient of a finite field.
\item There are 7 isomorphism classes of hyperfields of order 4. Of these, 4 are quotients of finite fields, 1 is a quotient of an infinite field but not of any finite field, and the remaining 2 are not quotients of any field.
\end{enumerate}
\end{theorem}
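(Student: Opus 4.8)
The plan is to separate the two tasks bundled in the statement: first \emph{enumerate} the isomorphism classes of hyperfields of each order $n\in\{2,3,4\}$, and then, for each class, decide whether it is a quotient of a field. For the enumeration I would exploit the fact that the multiplicative monoid of a hyperfield of order $n$ is forced: $H^\times$ is the cyclic group $\Z/(n-1)\Z$ and $0$ is absorbing, so only the hyperaddition is free. The hyperaddition is then heavily constrained by the involution $x\mapsto -x$, commutativity, distributivity over the multiplication, and associativity of $\hyperplus$; running through the finitely many tables compatible with these axioms yields the counts $2$, $5$, and $7$. This is the ``routine case analysis'': laborious but not conceptually hard, the only subtlety being careful bookkeeping of which tables are identified by an automorphism of $H^\times$.

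For the \textbf{positive} results I would realize the relevant classes as explicit quotients. Order $2$ is immediate: $\F_2=\F_2/\{1\}$ and the Krasner hyperfield is $\K=\F_q/\F_q^\times$. For orders $3$ and $4$ I feed $r=2$ and $r=3$ into Theorem~\ref{first main theorem}: this produces $\H_2,\H_2'$ and $\H_3$ as quotients $\F_q/G^r_q$ for every $q\geq N_r$ of the prescribed residue, and I would then compute by hand the finitely many ``sporadic'' quotients with $q<N_r$ (namely $q=3,5$ for $r=2$ and $q=4,7,13,16$ for $r=3$). Matching these tables against the enumeration identifies exactly which classes are quotients of finite fields.

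The \textbf{negative} results are where the real work lies, and they split into two kinds. To show a hyperfield is not a quotient of any \emph{finite} field I must exclude $\F_q/G^r_q$ for \emph{all} admissible $q$ simultaneously; Theorem~\ref{first main theorem} disposes of $q\geq N_r$, so the remaining point is a uniform obstruction for the small $q$. For the hyperfield of signs this is elementary: its defining relation $+\hyperplus +=\{+\}$ would force the index-$2$ subgroup $G$ of squares to satisfy $G+G\subseteq G$, whence $\{0\}\cup G$ is a finite additively-closed submonoid of $(\F_q,+)$, hence an additive subgroup, forcing $(q+1)/2$ to be a power of $\operatorname{char}\F_q$ --- which never holds; its realization as $\R/\R_{>0}$ exhibits it as an infinite quotient. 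For order $4$ the analogous ``sum-closed coset'' obstruction is genuinely Ramsey-theoretic: the theorem of Bergelson--Shapiro and Turnwald \cite{BergelsonShapiro,Turnwald} guarantees, uniformly in $q$, a monochromatic solution of $x+y=z$ among the three cubic cosets, contradicting the addition tables of the offending order-$4$ hyperfields and so excluding them as finite quotients for every $q$.

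The hardest part, and the main obstacle, is distinguishing the order-$4$ hyperfield that is an infinite-but-not-finite quotient from the two that are quotients of \emph{no} field at all. For the former I would exhibit an explicit infinite field with an index-$3$ subgroup realizing the table, in the spirit of $\R/\R_{>0}$ (e.g.\ a field carrying a surjective $\Z/3$-valued character with the right additive behaviour). For the latter two I need an \emph{intrinsic} obstruction --- a condition satisfied by every quotient $F/G$, finite or infinite --- that these two tables violate; since Krasner's question has a negative answer (Massouros), such obstructions exist, and I expect the relevant one to be a distributivity/associativity incompatibility that forces one of the hypersums to be strictly larger than the table permits for any genuine field addition. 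Isolating and verifying this field-independent obstruction for exactly the two bad tables is the crux of the argument.
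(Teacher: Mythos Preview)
Your enumeration strategy and your plan for the positive realizations match the paper's approach. The gap is in how you deploy the Bergelson--Shapiro/Turnwald theorem: you have its role exactly backwards, and this causes both of your negative arguments for order $4$ to misfire.

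The theorem in \cite{BergelsonShapiro,Turnwald} is a statement about \emph{infinite} fields only: if $F$ is infinite and $[F^\times:G]<\infty$ then $G-G=F$. It says nothing ``uniformly in $q$'' about finite $\F_q$, so it cannot be used to exclude the offending order-$4$ tables as finite quotients. (Nor would a monochromatic Schur triple help even if available: table (7) already has $1\in 1\hyperplus 1$.) In the paper this step is trivial rather than Ramsey-theoretic: Theorem~\ref{first main theorem}, with the explicit bound $N_3=17$ from Remark~\ref{rem:betterNr}, reduces the finite-field exclusion to checking $q\in\{4,7,13,16\}$ by hand --- the very same sporadic computation you already propose for the \emph{positive} identifications.

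Conversely, the ``intrinsic obstruction'' you are hunting for in your final paragraph \emph{is} Bergelson--Shapiro/Turnwald, applied in its proper domain. For any infinite $F$ with $[F^\times:G]=3$ the index is odd, so $-1\in G$ and hence $G+G=G-G=F$; in the quotient this reads $1\hyperplus 1=\H$. Tables (6) and (7) have $1\hyperplus 1\subsetneq\H$, so they are not quotients of any infinite field, and combined with the finite check, of any field at all. There is no associativity or distributivity defect to find: these are bona fide hyperfields, and the obstruction lies in the additive combinatorics of fields, not in the hyperfield axioms. Table (9) does satisfy $1\hyperplus 1=\H$, and the paper realizes it as $\Q/G$ with $G=\{x\in\Q^\times:3\mid\ord_p(x)\}$ for any prime $p$.

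(Your alternative argument that $\S$ is not a finite quotient, via $G\cup\{0\}$ being an additive subgroup of $\F_q$ of impossible order $(q+1)/2$, is correct and is a pleasant variant of the paper's two proofs of that fact.)
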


\subsection*{Content overview}
In Section \ref{section: hyperfields}, we provide some background information on hyperfields. Theorem~\ref{first main theorem} will be proved in Section \ref{section: proof of the first main theorem} and Theorem~\ref{second main theorem} will be proved in Section~\ref{section: proof of the second main theorem}.  We conclude in Section \ref{section:open questions} with some open questions.


\section{Hyperfields}
\label{section: hyperfields}

\begin{definition}\label{def: hyperoperation}
A \emph{hyperoperation} on a set $F$ is a function
\[
 \hyperplus: \quad F\times F \quad \longrightarrow \quad \cP(F)
\]
from $F \times F$ to the power set of $F$ such that $a \hyperplus b$ is non-empty for all $a,b \in F$.
\end{definition}

A hyperoperation is called \emph{commutative} if $a \hyperplus b = b \hyperplus a$ for all $a,b$ $\in F$, and \emph{associative} if
\[
 \bigcup_{d\in b\hyperplus c} a\hyperplus d = \bigcup_{d\in a\hyperplus b} d\hyperplus c
\]
for all $a,b,c \in F$. 

\begin{definition}\label{def: hypergroup}
A \emph{hypergroup} is a set $G$ equipped with an associative hyperoperation $\hyperplus$ satisfying the following axioms: 
\begin{enumerate}
 \item\label{HG1} There is an element $0 \in G$ such that $0\hyperplus a=a\hyperplus 0=\{a\}$.  \hfill\textit{(identity)}
 \item\label{HG2} There is a unique element $-a$ in $G$ such that $0\in a\hyperplus (-a)$.  \hfill\textit{(inverses)}
 \item\label{HG3} $a\in b\hyperplus c$ if and only if $-b \in (-a)\hyperplus c$.    \hfill\textit{(reversibility)}
\end{enumerate}
\end{definition}

A hypergroup is called \emph{commutative} if the hyperoperation $\hyperplus$ is commutative.

\begin{definition}\label{def: hyperfield}
A \emph{hyperfield} is a set $\H$ equipped with a binary operation $\cdot$, a hyperoperation $\hyperplus$, and distinct elements $0,1 \in \H$ satisfying the following axioms: 
 \begin{enumerate}
 \item\label{HF1} $(\H,\hyperplus,0)$ is a commutative hypergroup.
 \item\label{HF2} $(\H^\times,\cdot,1)$ is an abelian group.
 \item\label{HF3} $a\cdot 0=0\cdot a = 0$ for all $a \in \H$.
 \item\label{HF4} $a\cdot(b\hyperplus c)=ab\hyperplus ac$ for all $a,b,c \in \H$, where $a\cdot(b\hyperplus c)$ is defined as $\{ad \; | \; d\in b\hyperplus c\}$.   \hfill\textit{(distributivity)}
\end{enumerate}
\end{definition}

It follows easily from the definitions that $(-1)^2 = 1$ in any hyperfield $\H$.

Some important examples of hyperfields are as follows: 
\begin{enumerate}
 \item Every field $F$ is tautologically a hyperfield by defining $a\hyperplus b=\{a+b\}$. 
 \item The \emph{Krasner hyperfield} $\K = \{0, 1\}$ is equipped with the usual multiplication and hyperaddition characterized by $1 \hyperplus 1 = \{0,1\}$. 
 \item The \emph{hyperfield of signs} $\S = \{0,1,-1\}$ is equipped with the usual multiplication and hyperaddition characterized by the rules $1\hyperplus 1=\{1\}$, $-1\hyperplus -1=\{-1\}$ and $1\hyperplus-1=\{0,1,-1\}$. 
 \item The \emph{weak hyperfield of signs} $ \W = \{0, 1, -1\}$ is equipped with the usual multiplication and hyperaddition characterized by the rules $1\hyperplus 1=-1\hyperplus -1=\{1, -1\}$ and $1\hyperplus-1=\{0,1,-1\}$. 
\end{enumerate}

Let $F$ be a field and let $G$ be a subgroup of $F^\times$. The multiplicative monoid $F/G = (F^\times / G) \cup \{ 0 \}$ can be endowed with a natural hyperfield structure by setting
\[
   [a] \hyperplus [b] \ = \ \big\{ \ [c]\ \big| \ c=a'+b'\text{ for some }a'\in [a], b'\in [b] \ \big\}.
\]

We call hyperfields of this form {\em quotient hyperfields}. The four examples given above are all quotient hyperfields: indeed, we have $\K = F/F^\times$ for any field $F$ with more than two elements, $\S = \R / \R_{>0}$, and $\W = \F_p / (\F_p^\times)^2$ for any prime number $p\geq 7$ with $p \equiv 3 \pmod{4}$.

\begin{definition}
Let $\H_1$ and $\H_2$ be hyperfields. A map $\phi: \H_1 \to \H_2$ is called a \emph{hyperfield homomorphism} if $\phi(0) = 0$, $\phi(1)=1$, $\phi(ab)=\phi(a)\phi(b)$, and $\phi(a \hyperplus_1 b) \subset \phi(a) \hyperplus_2 \phi(b)$ for all $a, b \in \H_1$.
\end{definition}

A homomorphism of hyperfields $\phi : \H_1 \to \H_2$ is an \emph{isomorphism} if there is an inverse homomorphism $\psi : \H_2 \to \H_1$. 


\section{Proof of Theorem~\ref{first main theorem}}
\label{section: proof of the first main theorem}

Our proof of Theorem~\ref{first main theorem} is based on the following well-known inequality from number theory:

\begin{theorem}[Davenport--Hasse] \label{thm:Weil}
Let $a$, $b$, and $c$ be nonzero elements of the finite field $\F_q$, and let $r$ be a positive integer with $r\mid q-1$. Then the number $M(q)$ of projective solutions in $\mathbb{P}^2(\F_q)$ to the homogeneous equation $ax^r+by^r+cz^r=0$ satisfies
$$\mid M(q) - (q+1)\mid \leq (r-1)(r-2)q^{1/2}.$$
\end{theorem}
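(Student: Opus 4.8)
The plan is to follow the classical approach of Davenport and Hasse through Gauss and Jacobi sums, reducing the point count to a sum of terms each of absolute value exactly $q^{1/2}$ and then counting those terms. First I would pass from projective to affine solutions. Writing $N$ for the number of $(x,y,z) \in \F_q^3$ satisfying $ax^r + by^r + cz^r = 0$, the origin is always a solution, while every other affine solution scales by $\F_q^\times$ to a unique projective point whose $q-1$ scalar multiples are all distinct; hence $N = 1 + (q-1)M(q)$, and it suffices to analyze $N$.

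Next I would fix a nontrivial additive character $\psi$ of $\F_q$ and use orthogonality, $\frac{1}{q}\sum_{t \in \F_q}\psi(tv) = \mathbf{1}[v=0]$, to write
\[
N = \frac{1}{q}\sum_{t \in \F_q}\Big(\sum_x \psi(tax^r)\Big)\Big(\sum_y \psi(tby^r)\Big)\Big(\sum_z \psi(tcz^r)\Big).
\]
The term $t=0$ contributes $q^2$. Because $r \mid q-1$, I can choose a multiplicative character $\chi$ of exact order $r$ and expand $\sum_x \psi(t\alpha x^r) = \sum_{i=0}^{r-1}\sum_u \chi^i(u)\psi(t\alpha u)$, where for $t\neq 0$ the term $i=0$ vanishes and the terms $i\neq 0$ equal the twisted Gauss sums $\bar\chi^i(t\alpha)\,g(\chi^i)$. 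Multiplying the three factors out and summing over $t\neq 0$, the orthogonality relation $\sum_{t\neq 0}\bar\chi^{\,i+j+k}(t) = (q-1)\,\mathbf{1}[\,i+j+k\equiv 0\!\!\pmod r\,]$ collapses everything to a sum over triples $(i,j,k)\in\{1,\dots,r-1\}^3$ with $i+j+k\equiv 0 \pmod r$. The upshot is that $M(q)$ equals $q+1$ plus $\tfrac{1}{q}$ times a sum, over these triples, of a unit-modulus character factor $\bar\chi^i(a)\bar\chi^j(b)\bar\chi^k(c)$ times the Gauss-sum product $g(\chi^i)g(\chi^j)g(\chi^k)$.

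The key estimate is that each such term has absolute value exactly $q^{1/2}$. Indeed $|g(\chi^i)| = q^{1/2}$ for every nontrivial $\chi^i$, so $|g(\chi^i)g(\chi^j)g(\chi^k)| = q^{3/2}$, and dividing by $q$ leaves $q^{1/2}$; equivalently, since $i+j+k\equiv 0$ forces $\chi^i\chi^j = \bar\chi^k$ to be nontrivial, one can rewrite $\tfrac1q g(\chi^i)g(\chi^j)g(\chi^k)$ as a unit scalar times the Jacobi sum $J(\chi^i,\chi^j) = g(\chi^i)g(\chi^j)/g(\chi^i\chi^j)$ of modulus $q^{1/2}$. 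It then remains to count the index triples: for each $(i,j)\in\{1,\dots,r-1\}^2$ the residue $k\equiv -i-j$ is forced, and it lies in $\{1,\dots,r-1\}$ exactly when $i+j\not\equiv 0\pmod r$; since $i+j\equiv 0$ occurs for precisely $r-1$ of the $(r-1)^2$ pairs, there are $(r-1)^2-(r-1) = (r-1)(r-2)$ valid triples. Combining gives $|M(q)-(q+1)|\le (r-1)(r-2)\,q^{1/2}$.

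The one genuinely nontrivial input I would need to cite or prove is the Gauss sum magnitude $|g(\chi)| = q^{1/2}$ for nontrivial $\chi$; beyond that, the anticipated difficulty is purely bookkeeping—tracking the unit-modulus character factors $\bar\chi^i(ta)$ through the orthogonality steps and confirming that no degenerate Jacobi sum (with some partial product of the $\chi$'s trivial) arises among the counted triples. I note that one can sidestep the sum manipulation entirely through algebraic geometry: since $r$ is invertible in $\F_q$ (as $\gcd(q-1,p)=1$ forces $p\nmid r$), the plane curve $ax^r+by^r+cz^r=0$ is smooth of degree $r$, hence has genus $g=(r-1)(r-2)/2$, and the Hasse--Weil bound $|M(q)-(q+1)|\le 2g\,q^{1/2}$ yields the result at once; the agreement $2g = (r-1)(r-2)$ with the Jacobi-sum count is a reassuring consistency check between the two routes.
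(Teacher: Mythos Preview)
Your proposal is correct and aligns precisely with what the paper does: the paper itself offers no argument but merely cites Cohen for the Jacobi-sum computation and remarks that the inequality is a special case of the Weil bound for smooth curves of genus $g=(r-1)(r-2)/2$. You have supplied careful detail for both of these routes, so there is nothing to add.
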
 

\begin{proof}
See, e.g., \cite[Corollary 2.5.23]{Cohen} for an ``elementary'' proof using Jacobi sums derived from the work of Davenport and Hasse \cite{DavenportHasse}. The Davenport--Hasse theorem is a special case of the more general result, proved by Weil, that if $C / \F_q$ is a nonsingular projective curve of genus $g$ and $C(\F_q)$ denotes the set of points of $C$ over $\F_q$ then $| \# C(\F_q) - (q+1) | \leq 2g q^{1/2}$.
\end{proof}

We now give the proof of Theorem~\ref{first main theorem}.

\begin{proof}[Proof of Theorem~\ref{first main theorem}] 

Let $\F_q$ be a finite field with $q=p^n$ elements, and let $g$ be a generator of the multiplicative group $\F_q^\times$. Then for $r \mid q-1$, the unique subgroup $G^r_q$ of $\F_q^\times$ of index $r$ is of the form 
$$\{g^r, g^{2r}, ..., g^{sr}\},$$
where $q=sr+1$. 

Given $i, j, k\in\{1, 2, ..., r\}$, the following are equivalent:
\begin{enumerate}
 \item\label{solu1} $[g^k] \in [g^i] \hyperplus [g^j]$ in $\H$. 
 \item\label{solu2} There exists at least one solution $(x,y,z) \in (\F_q^\times)^3$ to the homogeneous polynomial equation 
\begin{equation}\label{equa}
  g^ix^r+g^jy^r-g^kz^r=0.
\end{equation}
\end{enumerate}

For projective solutions to (\ref{equa}) with $x=0$, we have
$$g^jy^r-g^kz^r=0,$$
or equivalently, 
$$(z/y)^r = g^{j-k}.$$
Therefore the number of such solutions is at most $r$, and thus the equation (\ref{equa}) has at most $3r$ projective solutions with some coordinate equal to zero. 

For $q \geq r^4$, Theorem~\ref{thm:Weil} implies that the number $N(q)$ of projective solutions in $\F_q$ to
  $$g^ix^r+g^jy^r-g^kz^r=0$$ 
with $x$, $y$, and $z$ all nonzero satisfies
\[
\begin{split}
N(q) & \geq M(q) - 3r\\
 & \geq (q+1) -  (r-1)(r-2)q^{1/2} - 3r \\
 & = \sqrt{q} (\sqrt{q} - (r-1)(r-2)) -3r +1\\
 & \geq r^2(3r-2) -3r +1\\
 & >0
\end{split}
\]
for $r\geq 2$. 

It follows that $[g^k] \in [g^i] \hyperplus [g^j]$ for all $i, j, k \in \{ 1, 2,\ldots, r\}$, i.e., $\H^\times\subseteq  [g^i] \hyperplus [g^j]$ for every $i, j \in \{ 1, 2,\ldots, r\}$. 

It remains to determine for which $i$ we have $0 \in [1] + [g^i]$, that is, $[g^i] = [-1]$. Since $[g^i] = [g^j]$ if and only if $i$ and $j$ are congruent modulo $r$ and $[-1] = [g^{(q-1)/2}]$, this happens precisely when $i$ is congruent to $(q-1)/2$ modulo $r$. So it suffices to calculate the value of $(q-1)/2$ modulo $r$. 

\medskip\noindent\textbf{Case 1:} {$r$ is odd. }

\smallskip

We claim that $[-1] = [g^{(q-1)/2}]= [g^0]= [1]$, i.e.,  $0 \in [1]\hyperplus[1]$.

\medskip\noindent\textbf{Case 1a:} $p=2$. 
In this case, the inclusion $0 \in [1]\hyperplus[1]$ follows easily from the fact that
$x + x = 0$ for each $x \in \F_q$. 

\medskip\noindent\textbf{Case 1b:} $q = p^n$ is an odd prime power.
In this case, since both $q$ and $r$ are odd, we write $q = 2lr+1$. Then
\[
  \frac{q-1}{2} \equiv lr \equiv 0.
\]

It follows that $\H$ is isomorphic to the hyperfield $\mathbb{H}_r$ whose multiplicative group is cyclic of order $r$ and whose hyperaddition is characterized by the identities $-x = x$ for all $x \in \mathbb{H}_r$; $x \hyperplus x= \mathbb{H}_r$ for $x \ne 0$; and $x \hyperplus y =\mathbb{H}_r^\times$ for distinct nonzero $x$ and $y$. 
 
\medskip

 \noindent\textbf{Case 2:} {$r$ is even.} 

\smallskip

In this case, we claim that $i=0$ if $q \equiv 1$ ($\textrm{mod} \ 2r$) and $i=\frac{r}{2}$ if $q \equiv r+1$ ($\textrm{mod} \ 2r$), i.e., $0 \in [1]\hyperplus[1]$ if $q \equiv 1$ ($\textrm{mod} \ 2r$), and $0 \in [1]\hyperplus[g^{r/2}]$ if $q \equiv r+1$ ($\textrm{mod} \ 2r$). 

 \medskip\noindent\textbf{Case 2a:} $q \equiv 1$ ($\textrm{mod} \ 2r$).
 
Write $q= 2lr+1$. Then 
\[
  \frac{q-1}{2} \equiv lr \equiv 0.
\]

Thus $i=0$ and $\H$ is isomorphic to the hyperfield $\mathbb{H}_r$ defined above.

 \medskip\noindent\textbf{Case 2b:} $q \equiv r+1$ ($\textrm{mod} \ 2r$).

Write $q= (2l+1)r+1$. Then
\[
  \frac{q-1}{2} \equiv lr + \frac{r}{2} \equiv \frac{r}{2}.
\]

Thus $i=\frac{r}{2}$ and $\H$ is isomorphic to the hyperfield $\mathbb{H}'_r$ whose multiplicative group is cyclic of order $r$ (with a unique element $g'$ of order $2$) and whose hyperaddition is characterized by the identities $-x = g' x$ for all $x \in \mathbb{H}'_r$; $x \hyperplus g' x = \mathbb{H}'_r$ for $x \ne 0$; and $x \hyperplus y =(\mathbb{H}'_r)^\times$ for nonzero $x,y \in \mathbb{H}'_r$ with $y \neq g' x$.

\medskip
 
Finally, note that $\H_r$ and $\mathbb{H}'_r$ are not isomorphic, since if $$\phi: \mathbb{H}_r \to \mathbb{H}'_r$$ was an isomorphism of hyperfields we would have
$$\phi(\mathbb{H}_r) = \phi([1] \hyperplus [1]) = \phi([1]) \hyperplus' \phi([1]) = [1] \hyperplus' [1]=(\mathbb{H}'_r)^\times, $$
a contradiction.
 
This concludes the proof of Theorem~\ref{first main theorem}. 
\end{proof}


\section{Proof of Theorem~\ref{second main theorem}}
\label{section: proof of the second main theorem}

In this section we prove Theorem~\ref{second main theorem} concerning the structure of hyperfields of order at most $4$.
In addition to Theorem~\ref{first main theorem}, we will use the following result proved independently by Bergelson and Shapiro \cite[Theorem 1.3]{BergelsonShapiro} and Turnwald  \cite[Theorem 1]{Turnwald}:

\begin{theorem}[Bergelson--Shapiro, Turnwald] 
\label{thm:Turnwald}
If $F$ is an infinite field and $G$ is a subgroup of $F^\times$ of finite index then $G - G = F$.
\end{theorem}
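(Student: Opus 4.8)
The plan is to translate the additive assertion $G - G = F$ into a purely Ramsey-theoretic statement about the coloring of $F^\times$ by cosets of $G$. Write $n = [F^\times : G] < \infty$, let $A = F^\times/G$ be the resulting finite abelian group of order $n$, and let $\pi \colon F^\times \to A$ be the quotient homomorphism, which I regard as an $n$-coloring of $F^\times$; its fibers are the cosets of $G$, and $G = \ker \pi$ is infinite because it has finite index in the infinite group $F^\times$. First I would record the elementary structure of $D := G - G$: it contains $0 = 1 - 1$, and it is stable under multiplication by $G$, since $g(g_1 - g_2) = g g_1 - g g_2$. Hence $D$ is $\{0\}$ together with a union of cosets of $G$, and proving $D = F$ amounts to showing that every coset is contained in $D$.

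The key reduction is the following equivalence: $G - G = F$ if and only if for every $\lambda \in F^\times$ the equation $V - W = \lambda Z$ has a $\pi$-monochromatic solution with $V, W, Z \in F^\times$. Indeed, given such a solution, $\pi(V) = \pi(W) = \pi(Z)$ forces $V/Z, W/Z \in G$, whence $\lambda = V/Z - W/Z \in G - G$; conversely, if $\lambda = g_1 - g_2 \in G - G$ then $(V,W,Z) = (g_1, g_2, 1)$ is a monochromatic solution. Thus the entire content of the theorem is the partition regularity, for the finite coloring $\pi$, of each equation $V - W - \lambda Z = 0$. The point is that its coefficient vector $(1, -1, -\lambda)$ satisfies Rado's columns condition, since the subvector $(1,-1)$ sums to $0$; this is exactly the condition that makes such an equation partition regular.

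To supply the monochromatic solutions I would argue as in Schur's theorem: choose distinct $e_1, \dots, e_m \in F^\times$ (possible since $F$ is infinite), color each edge $\{i,j\}$ with $i<j$ of the complete graph $K_m$ by $\pi(e_i - e_j) \in A$, and take $m$ larger than the $n$-color Ramsey number for triangles. A monochromatic triangle $i<j<k$ then yields $P = e_i - e_j$, $Q = e_j - e_k$, $R = e_i - e_k$ with $P + Q = R$ and $\pi(P) = \pi(Q) = \pi(R)$, that is, a monochromatic solution of the case $\lambda = 1$. The weighted equation $V - W = \lambda Z$ is handled by the analogous Rado-type argument, producing a nondegenerate monochromatic solution with $Z \neq 0$. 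I expect the main obstacle to be precisely this last point: the classical Schur--Rado machinery is developed for colorings of $\N$, whereas here the coloring lives on the multiplicative group of an arbitrary infinite field, the weight $\lambda$ may be transcendental, and in characteristic $p$ the prime field is finite, so one cannot simply restrict to a copy of $\N$. The crux is therefore to establish the field-theoretic version of partition regularity, guaranteeing a monochromatic solution with $Z \neq 0$ so that division by $Z$ is legitimate, whereas the algebraic reduction above is routine.
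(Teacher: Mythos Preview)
Your reduction is correct: $G-G=F$ is equivalent to finding, for each $\lambda\in F^\times$, a $\pi$-monochromatic solution of $V-W=\lambda Z$ in $(F^\times)^3$, and the Schur-via-Ramsey argument you give cleanly settles $\lambda=1$. The gap is exactly where you locate it yourself: the sentence ``the weighted equation $V-W=\lambda Z$ is handled by the analogous Rado-type argument'' is an assertion, not an argument, and your final paragraph correctly identifies why the classical Rado machinery does not transfer. In characteristic $p$ there is no copy of $\N$ inside $(F,+)$ to restrict to, arithmetic progressions have length at most $p$, and $\lambda$ need not lie in the prime field, so invoking the columns condition is not enough---you would have to prove a field-theoretic partition-regularity statement for $V-W=\lambda Z$, and that statement is essentially equivalent to the theorem you are trying to prove. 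As written, the proof is incomplete.

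The paper takes Turnwald's route, which sidesteps this obstacle by replacing Rado with the Hales--Jewett theorem. Rather than fixing $\lambda$ and seeking one monochromatic solution at a time, one fixes coset representatives $x_0=0,x_1,\dots,x_r$, chooses $c_1,\dots,c_N\in F$ with all nonempty subset-sums nonzero (possible since $F$ is infinite), and colors the cube $\{0,\dots,r\}^N$ by $(k_1,\dots,k_N)\mapsto[\sum_j c_j x_{k_j}]$. A monochromatic combinatorial line then yields a single $c\in F^\times$ with $1+cx_i\in G$ for every $i$ simultaneously, so every coset $cx_iG$ lies in $G-1\subseteq G-G$ at once. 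The point is that Hales--Jewett is purely combinatorial and needs no additive structure on the index set, so it works uniformly in all characteristics and disposes of all cosets in one stroke---precisely the difficulty your per-$\lambda$ strategy runs into. Your outline is closer in spirit to the Bergelson--Shapiro proof, which does complete a Ramsey-style argument of the kind you sketch (using amenability of the finite quotient rather than Rado); if you want to salvage your route, that paper is the model to follow.
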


Here, as usual, $G-G$ denotes the set $\{ x - y \; : \; x,y \in G \}$. 

For the reader's convenience, we present Turnwald's elegant short argument below; it is an application of the Hales-Jewett theorem \cite{HalesJewett} from Ramsey theory. (The argument of Bergelson--Shapiro uses a simpler variant of Ramsey's theorem plus the amenability of finite groups.)

In order to state the Hales-Jewett theorem, define a subset $L$ of $\{ 0,\ldots,m \}^N$ to be a {\em combinatorial line} if there is a partition of $\{ 1,\ldots,N \}$ into disjoint subsets $J_0$ and $J_1$, with $J_1 \neq \emptyset$, and elements $k_j' \in \{ 0,\ldots,m \}$ for $j \in J_0$ such that 
\[
L = \{ (k_1,\ldots,k_N) \; : \; k_j = k_j' {\rm \; for \; } j \in J_0 {\rm \; and \; } k_{j_1} = k_{j_2} {\rm \; for \; } j_1,j_2 \in J_1 \}.
\]
The Hales-Jewett theorem asserts that for every $m,n \in \N$ there exists $N(m,n) \in \N$ such that if $S$ is a set, $N \geq N(m,n)$, and $f : \{ 0, \ldots, m \}^N \to S$ is a function taking on at most $n$ values, then $f$ must be constant on some combinatorial line.

\begin{proof}[Proof of Theorem~\ref{thm:Turnwald}]
Let $x_0 = 0$ and let $x_1,\ldots,x_r \in F$ be a set of (left) coset representatives for $G$ in $F^\times$.
We claim that there exists $c \in F^\times$ such that $1 + cx_i \in G$ for all $i=1,\ldots,r$.
Given the claim, if we let $y_i = cx_i$ then $y_1,\ldots,y_r$ also form a set of coset representatives for $G$ and $1+y_i \in G$ for all $i$.
It follows that $y_i G \subseteq G - G$ for all $i$, and hence $G-G=F$ as desired.

To prove the claim, let $N=N(r,r+1)$ be the bound given in the Hales-Jewett theorem. Since $F$ is infinite, there exist $c_1,\ldots,c_N \in F$ so that $\sum_{j \in J} c_j \neq 0$ for all non-empty subsets $J \subseteq \{ 1,\ldots, N \}$ (choose $c_k$ inductively so that $\sum_{j \in J} c_j \neq 0$ for all $J \subseteq \{ 1,\ldots,k \}$). Let $\H = F / G$, and define $f : \{ 0,\ldots,r \}^N \to \H$ by $f(k_1,\ldots,k_N) = [\sum_{j=1}^N c_j x_{k_j}]$. By the Hales-Jewett theorem, there is a
partition of $\{ 1,\ldots,N \}$ into disjoint subsets $J_0$ and $J_1$, with $J_1 \neq \emptyset$, and elements $k_j' \in \{ 0,\ldots,r \}$ for $j \in J_0$ such that 
$f$ is constant on the corresponding combinatorial line $L$. Unwinding the definitions, this means that $[a + bx_k]$ is constant for all $0 \leq k \leq r$, where $a=\sum_{j \in J_0} c_j x_{k'_j}$ and $b = \sum_{j \in J_1} c_j$. Since $x_0 = 0$, this constant value is equal to $[a]$, and thus $aG = (a+bx_k)G$ for all $k=1\ldots,r$. Setting $c = a^{-1}b$ establishes the claim. 
(Note that $a \neq 0$ since $b \neq 0$ and $x_k \neq 0$ for $k \in \{ 1,\ldots,r \}$.)
\end{proof}

\begin{remark}
The same proof applies verbatim if $F$ is replaced by an infinite division ring which is not necessarily commutative.
\end{remark}

\begin{remark} \label{rem:odd index}
If $[F^\times:G]$ is odd, note that $-1 \in G$ since $[-1]^2 = [1]$ in the odd-order group $F^\times/G$, and hence $G-G = G+G$.
\end{remark}

\medskip

We are now ready to prove Theorem~\ref{second main theorem}.

\medskip

\begin{proof}[Proof of Theorem~\ref{second main theorem}] 

We consider separately the cases where $|\H |$ is equal to 2, 3, or 4.

 \medskip\noindent\textbf{Case 1:} Hyperfields of order $2$.
It is clear by inspection that every hyperfield of order 2 is isomorphic to either the Krasner hyperfield $\K$ or the field $\F_2$ of two elements.
Moreover, $\K$ is isomorphic to $F / F^\times$ for any field $F$ with $|F|>2$. This proves part (1) of Theorem~\ref{second main theorem}.

 \medskip\noindent\textbf{Case 2:} Hyperfields of order $3$.
 
Let $\H$ be a hyperfield with $| \H | = 3$. The multiplicative group $\H^\times$ must be cyclic of order $2$, so we can write $\H = \{ 0, 1, g \}$ with $g^2 = 1$ and $g \neq 1$. (We note that $g$ might or might not be equal to $-1$ in $\H$.) A straightforward case analysis using the hyperfield axioms now shows that there are precisely 5 possible hyperaddition structures for $\H$. Indeed, by the distributive law (Definition~\ref{def: hyperfield}(4)) the hyperaddition table for $\H$ is completely determined by $1 \hyperplus 1$ and $1 \hyperplus g$, and only the following 5 possibilities are compatible with the associativity of $\hyperplus$ and the reversibility axiom (Definition~\ref{def: hypergroup}(3)):

\begin{enumerate}
\item $1\hyperplus 1 = \{ g \}$ and $1 \hyperplus g = \{ 0 \}$.  In this case, $\H \cong \F_3$.
\item $1\hyperplus 1 = \{ 0,g \}$ and $1 \hyperplus g = \{ 1,g \}$.  In this case, $\H \cong \F_5 / G^2_5$.
\item $1\hyperplus 1 = \H$ and $1 \hyperplus g = \H^\times$.  In this case, $\H \cong \H_2 \cong \F_q / G^2_q$ for all prime powers $q \geq 7$ and $q \equiv 1$ ($\textrm{mod} \ 4$).
\item $1\hyperplus 1 = \H^\times$ and $1 \hyperplus g = \H$.  In this case, $\H \cong \H'_2 \cong \F_q / G^2_q$ for all prime powers $q \geq 7$ and $q \equiv 3$ ($\textrm{mod} \ 4$).
\item $1\hyperplus 1 = \{ 1 \}$ and $1 \hyperplus g = \H$.  In this case, $\H \cong \S \cong \R / \R_{>0}$.
\end{enumerate}

It is straightforward to verify that none of these five hyperfields is isomorphic to any other one. 

Moreover, $\S$ is not isomorphic to a quotient of any finite field $\F_q$ because otherwise there would be a homomorphism $\F_q \to \S$ and hence, by 
\cite[Section 3]{Marshall}, an ordering on $\F_q$; however, it is well known that every ordered field has characteristic zero.
(Indeed, if $F$ were an ordered field of characteristic $p>0$ then we would have $-1 = p-1 = 1 + 1 + \cdots + 1 \; ( p-1 \; {\rm times}) > 0$, a contradiction.)
Alternatively, using Remark~\ref{rem:betterNr}, it suffices to check that $\F_q / G^2_q \not\cong \S$ for $q  \leq 5$, which is straightforward.

This proves part (2) of Theorem~\ref{second main theorem}.

 \medskip\noindent\textbf{Case 3:} Hyperfields of order $4$.
 
Let $\H$ be a hyperfield with $| \H | = 4$. The multiplicative group $\H^\times$ must be cyclic of order $3$, so we can write $\H = \{ 0, 1, g, g^2 \}$ with $g^3 = 1$. 
Note in this case that $1 \neq -1$ in $\H$, since $\H^\times$ contains no element of order $2$.
A tedious, but still straightforward, case analysis using the hyperfield axioms now shows that there are precisely 9 possible hyperaddition structures for $\H$: 

\begin{enumerate}
\item $1\hyperplus 1 = \{ 0 \}$ and $1 \hyperplus g = \{ g^2 \}$. In this case, $\H \cong \F_4$. 
\item $1\hyperplus 1 = \{ 0, g \}$, and $1 \hyperplus g = \{ 1, g^2 \}$. In this case, $\H \cong \F_7 / G^3_7$. 
\item $1\hyperplus 1 = \{ 0, g^2 \}$ and $1 \hyperplus g = \{ g, g^2 \}$. In this case, $\H$ is isomorphic to the hyperfield in (2). 
\item $1\hyperplus 1 = \{ 0, g, g^2 \}$ and $1 \hyperplus g = \H^\times$. In this case, $\H \cong \F_{13} / G^3_{13} \cong \F_{16} / G^3_{16}$. 
\item $1\hyperplus 1 = \H $ and $1 \hyperplus g = \H^\times$. In this case, $\H \cong \F_q / G^3_q$ for all prime powers $q \geq 19$. 
\item $1\hyperplus 1 = \{ 0, g, g^2 \}$ and $1 \hyperplus g = \{ 1, g \}$. 
\item $1\hyperplus 1 = \{ 0, 1, g \}$ and $1 \hyperplus g = \{ 1, g^2 \}$. 
\item $1\hyperplus 1 = \{ 0, 1, g^2 \}$, and $1 \hyperplus g = \{ g, g^2 \}$. In this case, $\H$ is isomorphic to the hyperfield in (7). 
\item $1\hyperplus 1 = \H$ and $1 \hyperplus g = \{ 1, g \}$. 

\end{enumerate}

It is straightforward to verify that none of the hyperfields (1),(2),(4),(5),(6),(7),(9) is isomorphic to any other one. 

To see that the none of the hyperfields in (6),(7),(9) is a quotient of a finite field $\F_q$, it suffices by Remark~\ref{rem:betterNr} to check that none of these hyperfields is isomorphic to $\F_q / G^3_q$ for some $q  \leq 16$. We omit the details of this straightforward computation.

The hyperfield $\H$ in (9) can, however, be realized as the quotient of an infinite field. For example (cf.~\cite[Proposition 6]{LeepShapiro}), choose a prime number $p$, let ${\rm ord}_p$ denote the $p$-adic valuation on $\Q$, and let $G$ be the index 3 subgroup of $\Q^\times$ consisting of all rational numbers $x = a/b$ such that $\ord_p(x)$ is a multiple of $3$. It is straightforward to check, using the ultrametric inequality, that $\H \cong \Q / G$.

On the other hand, by Theorem~\ref{thm:Turnwald} and Remark~\ref{rem:odd index}, neither of the hyperfields (6),(7) can be the quotient of an infinite field since 
$G+G = G-G = F$ implies that $1 \hyperplus 1 = \H$, but in each of $(6),(7)$ we have $1 \hyperplus 1 \neq \H$.

This proves part (3) of Theorem~\ref{second main theorem}.
\end{proof}

\begin{remark}
Theorem~\ref{thm:Turnwald} also yields the following generalization of Massouros's example \cite{Massouros} mentioned in Section~\ref{intro}: if $\H$ is a hyperfield such that $\H^\times$ is not cyclic and $1\hyperplus (-1) \neq \H$, then $\H$ is not a quotient hyperfield.
Indeed, the former condition implies that $\H$ is not a quotient of a finite field and the latter implies that $\H$ is not a quotient of an infinite field.
\end{remark}


\section{Some open questions}
\label{section:open questions}

We conclude with some open questions.

\begin{enumerate}
\item What is the complete list of hyperfields of order 5, and which of these are quotients of finite (resp. infinite) fields? In principle the methods of this paper give an algorithm to enumerate all hyperfields of order 5 and determine which ones are quotients of finite fields, but the computations are probably too involved to do by hand and we have not attempted to write the necessary computer code. It is not clear {\em a priori} whether the methods of this paper will suffice to determine whether or not each such hyperfield is a quotient of some infinite field; some new ideas may be required.
\item Following up on question (1), is there a (practical, or even just theoretical) algorithm to determine whether or not a given finite hyperfield is a quotient of some infinite field? 
\item What is the true order of growth of $N_r$ in Theorem~\ref{first main theorem}? In other words, how small can we take $N_r$ so that all hyperfields of the form $\F_q / G^r_q$ with $q \geq N_r$ and $q \equiv 1 \pmod{r}$ are isomorphic to either $\H_r$ or $\H'_r$? 
For $r=2$ and $r=3$ the upper bound for $N_r$ given by Remark~\ref{rem:betterNr} is sharp; what happens in general? 
(Taking $\H = \F_{p^2} / \F_p^\times$ with $p=r-1$ shows that $N_r > (r-1)^2$ whenever $r-1$ is prime, but this is quite far from the upper bound furnished by Remark~\ref{rem:betterNr}.)
\item Let $H_r$ be the number of isomorphism classes of hyperfields of order $r+1$, and let $Q_r$ be the number of isomorphism classes of {\em quotient hyperfields} of order $r+1$.
How do the relative growth rates of $H_r$ and $Q_r$ compare? We conjecture, based on the considerations in this paper, that $Q_r / H_r$ tends to zero as $r$ tends to infinity. (If true, this would mean colloquially speaking that ``almost all'' hyperfields are not quotients of fields.) 

\end{enumerate}


.

\end{document}